\documentclass[11pt]{article}
\usepackage{amsmath,amssymb,amsbsy,amsfonts,amsthm,latexsym,
            amsopn,amstext,amsxtra,euscript,amscd,amsthm}

\newtheorem{lem}{Lemma}
\newtheorem{lemma}[lem]{Lemma}

\newtheorem{thm}{Theorem}
\newtheorem{theorem}[thm]{Theorem}

\def\\{\cr}
\def\({\left(}
\def\){\right)}
\def\[{\left[}
\def\]{\right]}
\def\<{\langle}
\def\>{\rangle}

\begin{document}

\title{On Carmichael numbers of the form $2^np^m+1$}

\date{\today}

\pagenumbering{arabic}

\author{Florian~Luca\footnote{Mathematics Division, Stellenbosch University, South Africa and Max Planck Institute for Software Systems, Saarbr\"ucken, Germany, e-mail: fluca@sun.ac.za}}

\maketitle

\begin{abstract}
Here, we show that if $m\ge 5$ is fixed and odd, then there are only finitely many Carmichael numbers of the form $2^np^m+1$ for positive integers $n$ and prime $p$. 
\end{abstract}

\section{Introduction}

A positive integer $N$ is Carmichael if it composite and $a^N\equiv a\pmod N$ holds for all integers $a$. It is known that there are infinitely many Carmichael numbers. They are all odd, squarefree and are characterized by  the property that $p-1\mid N-1$ for all prime factors $p$ of $N$. Every such number is of the form $2^nk+1$ for some positive integer $n$ and odd integer $k$. Let
$$
{\mathcal K}:=\{k~{\text{\rm odd}}: 2^nk+1~{\text{\rm is~Carmichael~for~some~positive~integer}}~n\},
$$
The set ${\mathcal K}$ has been investigated in a few papers. For example, in \cite{CLP}, it is shown that the smallest element in ${\mathcal K}$ is $27$. It comes from the Carmichael number $1729=2^6\times 3^3+1$ which also happens to be the famous Ramanujan taxicab number. In \cite{AL}, it was shown that ${\mathcal K}$ does not contain primes, in \cite{LR} it was shown that ${\mathcal K}$ does not contain squares of primes and in \cite{L1} 
it was shown that the only prime $p$ such that $p^3\in {\mathcal K}$ is $p=3$.  
This time we are studying numbers of the form $p^m$ in ${\mathcal K}$ for some $m\ge 4$ and prove the following theorem.

\begin{theorem}
\label{thm:1}
If $m\ge 5$ is odd then there only finitely many primes $p$ such that $p^m\in {\mathcal K}$. 
\end{theorem}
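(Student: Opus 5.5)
Let $N=2^np^m+1$ be Carmichael with $m\ge 5$ odd and $p$ an odd prime. Since $N$ is squarefree and composite, the Korselt criterion gives $q-1\mid N-1=2^np^m$ for every prime $q\mid N$. Hence every prime factor of $N$ has the form $q=2^ap^b+1$ with $0\le a\le n$, $0\le b\le m$, and the factorization reads
$$
2^np^m+1=\prod_{i=1}^{s}\(2^{a_i}p^{b_i}+1\),
$$
with distinct factors, where the $a_i$ are not all equal to $n$ simultaneously with the $b_i$ being all $m$, i.e. $s\ge 2$. The plan is to exploit this multiplicative identity to bound $p$.

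First I would reduce modulo $p$ and modulo $2^{\min a_i}$ to get constraints on the exponents. Expanding the product, the identity gives $\sum_i 2^{a_i}p^{b_i}+(\text{higher products})=2^np^m$. Let $b_0$ be the minimum of the $b_i$. Reducing modulo $p^{b_0+1}$ shows that the primes with $b_i=b_0$ must contribute a sum $\sum_{b_i=b_0}2^{a_i}\equiv 0\pmod p$ (when $b_0<m$), so either there is cancellation forcing $p\le \sum 2^{a_i}$, or $b_0=0$. Primes with $b_i=0$ are Fermat primes $2^{a}+1$, and those with $a_i=0$ are $p^b+1$, which is even unless $b=0$, so $a_i\ge1$ always. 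The analogous reduction modulo powers of $2$ (using the $2$-adic valuations) gives $\sum_{a_i=a_{\min}}p^{b_i}\equiv 0\pmod 2$, forcing an even number of factors at the minimal $2$-adic level. Iterating these ``lowest-order cancellation'' conditions yields a system of linear relations among the exponents $(a_i,b_i)$; since $\sum b_i\ge m$ and $\sum b_i$ cannot exceed $m$ by much (by comparing sizes, $\prod (2^{a_i}p^{b_i}+1)\approx 2^{\sum a_i}p^{\sum b_i}$), we get $\sum a_i = n$, $\sum b_i=m$ up to a bounded error, and only finitely many exponent patterns $(b_1,\dots,b_s)$ for fixed $m$.

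Second, for each fixed pattern of the $b_i$, the identity becomes a polynomial-exponential equation in the unknowns $p$ and $x_i=2^{a_i}$. Subtracting the leading term, the lower order terms $\sum_{T}\prod_{i\in T}2^{a_i}p^{b_i}$ must vanish; the main term among them is roughly $2^{n-a_j}p^{m-b_j}$ for some $j$, and the residual identity forces $p^{\min}$-sized quantities to equal sums of powers of $2$. I would then treat it as an $S$-unit equation with $S=\{2,p\}$ plus Baker-type bounds: the key relation will have the shape $2^{u}p^{v}\pm 2^{u'}p^{v'}\pm\cdots=\pm1$ with a bounded number of terms, and subspace-theorem / Evertse finiteness for nondegenerate solutions gives finitely many $(p,\text{exponents})$ once $p$ is genuinely a variable only in bounded exponent. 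Where $m$ odd enters: for even $m$, $2^np^m+1$ could split via Aurifeuillian-type or $x^2+1$ identities into infinite families (e.g. $b_i=m/2$ symmetric splittings), and oddness of $m$ rules out the degenerate symmetric solutions where two factors share $b_i=m/2$; similarly $m\ge5$ excludes the small cases handled separately ($m=3$ has $1729$).

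The main obstacle is the degenerate-solution analysis: unit equations have infinitely many solutions coming from vanishing subsums, and I must show each vanishing subsum pattern either forces $p$ bounded (via the congruence $p\mid\sum 2^{a_i}$ with a small number of terms combined with size comparisons) or contradicts $m$ odd. I would first try to show that after cancellation one always reaches a relation $p^{c}\mid 2^{d}\pm1$ or $2^{d}p^c=2^{e}+1$-type equation with $c\ge1$ and bounded shape, and then conclude by Zsigmondy/Catalan-type results that only finitely many $p$ occur.
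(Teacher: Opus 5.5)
Your opening reductions match the paper's Lemmas~\ref{lem:10} and~\ref{lem:2}: the congruence modulo $p^{b_0+1}$ forcing $p\mid\sum_{b_i=b_0}2^{a_i}$, and the evenness of the number of factors with minimal $a_i$. However, the step that is supposed to leave ``only finitely many exponent patterns'' is not justified. Comparing sizes gives only
$$0\le (n-\sum_i a_i)\log 2+(m-\sum_i b_i)\log p<\log\prod_i \frac{q_i}{q_i-1}.$$
This lets the $2$-part and the $p$-part trade off against each other; already for $1729=7\cdot 13\cdot 19$ one has $\sum a_i=4\ne n=6$ and $\sum b_i=4\ne m=3$. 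It yields neither $\sum b_i\ge m$, nor a bound on the number $s$ of prime factors, nor any control of the $a_i$.

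In the paper the bounded shape comes from two substantial steps you have no substitute for:
\begin{itemize}
\item The bound $n<25m^4\log p$ (Lemma~\ref{lem:n}). It comes from $N_0<p^{4m}$ for the Fermat-prime part, plus a pigeonhole argument bounding each $\gcd(2^{u_s}-1,2^np^m+1)$ by $2^{4m\sqrt{n\log p}}$.
\item A dichotomy on the order of $2$ modulo each $q_i$. If $p$ divides it, then $p\mid nc_i-a_im_i\ne 0$ with $|nc_i-a_im_i|\le mn$, so $p<500m^5\log m$. Otherwise all these orders are powers of $2$, and the $2$-adic analysis of $nc_i-a_im_i=2^{\omega_i-1}\delta_i$ (Lemma~\ref{lem:12}) gives $2^{\beta'}\ge (\log p)/m^3$, where $\beta'=\min\{\nu_2(n),\nu_2(a_1),\ldots,\nu_2(a_k)\}$.
\end{itemize}
Only then are $k$, $n/2^{\beta'}$ and $a_i/2^{\beta'}$ bounded in terms of $m$. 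The $a_i$ themselves stay unbounded; what makes the problem tractable is that they all move together with the single parameter $2^{\beta'}$.

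Your finiteness mechanism also fails for varying $p$. The $S$-unit and Subspace Theorem finiteness results are for a fixed set $S$. With $S=\{2,p\}$ changing with $p$, they give at best a uniform bound on the number of nondegenerate solutions for each $p$, never finiteness of the set of $p$. Likewise, a relation $p^c\mid 2^d\pm1$ holds for every odd prime $p$ and suitable $d$. So Zsigmondy or Catalan give nothing without an upper bound on $d$ in terms of $p$, which you have not produced.

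The paper's key idea is that, once the shape is fixed, $(X,Y)=(2^{2^{\beta'}},p)$ is an infinite family of solutions of one fixed equation $P(X,Y)=0$ with $P$ as in \eqref{eq:P}. Here $X$ lies in a finitely generated group and $Y$ is an integer. The Corvaja--Zannier theorem, which is where the Subspace Theorem really enters, then gives a polynomial parametrization. This yields the identity $Z^{dn_1'}p(Z)^m+1=\prod_i(Z^{d\lambda_i'}p(Z)^{b_i}+1)$ in $\mathbb{C}[Z]$. The contradiction comes from showing that the factors are cyclotomic polynomials and then applying Gauss--Lucas together with Kronecker's theorem.

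Finally, in the paper the oddness of $m$ is not used against symmetric splittings with $b_i=m/2$. It is used above all to show that $2^np^m$ and every $2^{a_i}p^{b_i}$ are multiplicatively independent, so that $nc_i-a_im_i\ne 0$. All of the above depends on that fact.
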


Our proof uses a result of Corvaja and Zannier from \cite{CZ} whose proof is based on the Schmidt's Subspace Theorem and therefore the proof is not effective. 
 
\section{The proof of Theorem \ref{thm:1}}

In Theorem 1 in \cite{CLP}, it was shown that if $k\ge 3$ is odd and is such that $N=2^nk+1$ is Carmichael, then 
$$
n<2^{2\times 10^7 \tau(k)^2 (\log k)^2 \omega(k)},
$$
where $\omega(k),~\tau(k)$ are the number of distinct prime factors of $k$ and the total number of positive divisors of $k$, respectively. In particular, if $k=p^m$ and $m$ is fixed, then 
$$
n<2^{2\times 10^7 (m+1)^2 m^2(\log p)^2}.
$$
Thus, if we fix $m$ and show that $p$ can take only finitely many values it will follow that there are only finitely many Carmichael numbers of the form $2^np^m+1$ for some prime $p$. This is what we will prove when $m$ is odd.

\subsection{Bounding $N$}

We follow the method of proof of Theorem 1 in \cite{CLP}. We  assume $m\ge 5$ and odd, $p>m$ is prime and let $N:=2^np^m+1$. Assume that $N$ is Carmichael and write 
\begin{equation}
\label{eq:0}
N=\prod_{i=1}^k (2^{a_i}p^{b_i}+1),\qquad 1\le a_i\le n,~0\le b_i\le m,
\end{equation}
where 
$$
q_{i}:=2^{a_i}p^{b_i}+1
$$
are distinct primes. It is well-known that $k=\omega(N)\ge 3$.  
The primes $q_i$ for which $b_i=0$, so $q_i=2^{a_i}+1$ are Fermat primes, therefore $a_{i}=2^{\beta_i}$ for all some positive integer $\beta_i$. Let us denote by $\alpha:=\max\{\beta_i: b_i=0\}$ the maximal exponent of a Fermat prime participating in the factorization \eqref{eq:0} of $N$. Lemma 2 in \cite{CLP} shows that $2^{2^{\alpha}}+1<p^{2m}$. In particular,
\begin{equation}
\label{eq:N0}
N_0:=\prod_{\substack{1\le i\le k\\ b_i=0}} q_i\le \prod_{j=0}^{\alpha} (2^{2^j}-1)=2^{2^{\alpha+1}}-1<(p^{2m})^2=p^{4m}.
\end{equation}
Next we show that $N/N_0>1$. Indeed, for if not, we have 
$$
N=N_0=\prod_{i=1}^k (2^{a_i}+1),
$$
where we may assume that $1\le a_1<a_2<\cdots<a_k$ and all $a_i$ are powers of $2$ for $i=1,\ldots,k$. Since $k\ge 3$, we have $a_1+1\le a_2\le n$, so we may reduce the above equation modulo $2^{a_1+1}$ to get that
$$
1\equiv 2^{a_1}+1\pmod {2^{a_1+1}},
$$
which is a contradiction. Thus, putting 
\begin{equation}
\label{eq:N1}
N_1:=N/N_0,
\end{equation}
we have that $N_1>1$.  Next let $i$ be such that $b_i\ge 1$ and let $q_{i}=2^{a_i}p^{b_i}+1$ be a prime factor of $N$. Assume first that $2^np^m$ and $2^{a_i}p^{b_i}$ are multiplicatively dependent. It is easy to see that this happens if and only if there exists an integer $\rho>1$ and an integer $t\ge 2$ such that 
$$
2^{n}p^m=\rho^t\qquad {\text{\rm and}}\qquad 2^{a_i}p^{b_i}=\rho^{\lambda}\qquad {\text{\rm for~some}}\qquad 1\le \lambda<t.
$$ 
The condition that $q_i\mid N$ implies
$$
\rho^{\lambda}+1\mid \rho^t+1\quad {\text{\rm which leads to}}\quad \lambda\mid t\quad {\text{\rm and}}\quad t/\lambda\equiv 1\pmod 2,
$$
and the condition that $q_i=\rho^{\lambda}+1$ is prime implies that $\lambda$ is a power of $2$. Interpreting this in terms of the exponent of the prime $p$ in the factorization of $\rho$, we get that $\lambda$ is the largest power of $2$ dividing $m$. 
Since $m$ is odd, we get that $\lambda=1$, so $t=m$. Thus, $n=mu$ for some integer $u$, $(a_i,b_i)=(u,1)$, $\rho=2^up$ and  
$$
2^np^m+1=(2^{u}p+1)((2^{u}p)^{m-1}-(2^up)^{m-2}+\cdots-2^u p+1).
$$
Let $q$ be any prime factor of the second factor above. We then get that $(2^up)^{m}\equiv -1\pmod q$, so the order of $2^up$ modulo $q$ is even and a divisor of $2m$. However, since $q$ is one of the $q_i$'s it follows that this order is a divisor of $q_i-1\mid 2^np^m$.  Since $p>m$, this order must be exactly $2$. In particular, $2^{u}p\equiv -1\pmod q$, showing that $q$ divides $2^{u}p+1$. Thus, 
$q$ divides both $2^up+1$ and $((2^{u}p)^m+1)/(2^up+1)$, so $q^2\mid N$, a contradiction. 

Let us record what we have proved.

\begin{lemma}
Assume $m\ge 5$ is odd, $p>m$ is prime and $N=2^np^m+1$ is Carmichael. Then with $N_0$ and $N_1$ given by \eqref{eq:N0} and \eqref{eq:N1}, respectively, we have that $N_1>1$ and if $q_i=2^{a_i}p^{b_i}+1$ is any prime factor of $N$, then 
$2^np^m$ and $2^{a_i}p^{b_i}$ are multiplicatively independent. 
\end{lemma}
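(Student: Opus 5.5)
The lemma only records what the preceding discussion established, so the plan is to reassemble those steps in order.

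\emph{Step 1: $N_1>1$.} Suppose $N_1=1$. Then every prime factor of $N$ is a Fermat prime $2^{a_i}+1$, with $a_i$ a power of $2$. Order them as $a_1<a_2<\cdots<a_k$. Since $k=\omega(N)\ge 3$, we have $a_1+1\le a_2\le n$. Reducing $N=\prod(2^{a_i}+1)=2^np^m+1$ modulo $2^{a_1+1}$ gives $1\equiv 2^{a_1}+1$, which is absurd.

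\emph{Step 2: multiplicative independence.} Fix a prime factor $q_i=2^{a_i}p^{b_i}+1$. If $b_i=0$, independence is immediate, because $2^{a_i}$ and $2^np^m$ are dependent only if $p$ is absent from $2^np^m$, which it is not. So take $b_i\ge 1$ and suppose the two numbers are multiplicatively dependent. Both are positive integers that are not $1$. Let $\rho>1$ be the common primitive root, so that $2^np^m=\rho^t$ and $2^{a_i}p^{b_i}=\rho^\lambda$ for positive integers $\lambda,t$. Here $\lambda<t$ because $q_i$ is a proper divisor of $N$, since $N$ is not prime.

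From $\rho^\lambda+1\mid\rho^t+1$, the standard gcd identity $\gcd(\rho^\lambda+1,\rho^t+1)$ forces $\lambda\mid t$ with $t/\lambda$ odd. Primality of $\rho^\lambda+1$ forces $\lambda$ to be a power of $2$, since any odd factor $d>1$ of $\lambda$ would make $\rho^{\lambda/d}+1$ a nontrivial factor. Now write $v$ for the exponent of $p$ in $\rho$, so $vt=m$ with $m$ odd. Then $t$ is odd, so $\lambda\mid t$ with $\lambda$ a power of $2$ gives $\lambda=1$.

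With $\lambda=1$ we get $\rho=2^{a_i}p^{b_i}$. Because $\rho$ is primitive (not a perfect power) and $\rho^t=2^np^m$, we have $\gcd(a_i,b_i)=1$, $b_it=m$ and $a_it=n$. I expect to need $t\ge 3$ (odd and $>1$) together with $p>m$ in what follows. The paper's simplification to $b_i=1$, $t=m$ is not really needed; the general case runs identically with $u=a_i$ and $\rho=2^{a_i}p^{b_i}$.

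\emph{Step 3: contradiction.} Factor
\[
N=(\rho+1)\cdot\frac{\rho^t+1}{\rho+1}.
\]
The second factor exceeds $1$, so it has a prime factor $q$. Then $\rho^t\equiv-1\pmod q$, so the order of $\rho$ modulo $q$ divides $2t$ but not $t$. By Korselt's criterion, $q-1\mid N-1=2^np^m$, so this order also divides $2^np^m$.

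Every divisor of $t$ divides $m$, and $p>m$, so $\gcd(2t,2^np^m)\mid 2\gcd(t,p^m)=2$. Hence the order is exactly $2$, which gives $\rho\equiv-1\pmod q$. Then $q\mid\rho+1$ and $q$ divides the cofactor, so $q^2\mid N$. This contradicts squarefreeness of the Carmichael number $N$.

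\emph{Main obstacle.} The step needing the most care is justifying the decomposition of dependent pairs as powers of a common $\rho$, and extracting $\lambda=1$ from the odd exponent of $p$. Once $\lambda=1$ is in hand, the final order argument is routine.
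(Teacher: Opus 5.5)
Your proof is correct and follows the paper's argument. It reduces modulo $2^{a_1+1}$ to get $N_1>1$, writes both numbers as powers of a common $\rho$, forces $\lambda=1$ from the oddness of $m$ and the primality of $q_i$, and uses the order argument to show that any prime factor $q$ of $(\rho^t+1)/(\rho+1)$ also divides $\rho+1$, giving $q^2\mid N$. Your version is slightly more careful than the paper's: you treat $b_i=0$ explicitly, and you allow general $t=m/b_i$ instead of asserting $t=m$, which does not follow in general (e.g.\ $\rho=2p^3$ with $m=9$ gives $t=3$); the argument goes through unchanged.
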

Let 
$$b:=\min\{b_i: q_i\mid N\}\quad {\text{\rm and}}\quad {I}(c):=\{i: q_i=2^{a_i}p^b+1\},\quad 0\le c\le m.
$$ 
Thus, $I(c)$ is exactly the subset of indices $i\in \{1,\ldots,k\}$ such that 
$$
\nu_{p}(q_i-1)=c.
$$ 
Here and in what follows for a prime $q$ and a nonzero integer $m$ we write $\nu_q(m)$ for the exact exponent of $q$ in the factorization of $m$. 
We prove the following lemma.

\begin{lemma}
\label{lem:10}
If $b=0$, then 
\begin{equation}
\label{eq:101}
p\mid N_0-1. 
\end{equation}
If $b>0$, then 
\begin{equation}
\label{eq:102}
p\mid \sum_{i\in I(b)} 2^{a_i},\qquad {\text{\rm or}}\qquad p\mid 2^n-\sum_{i\in I(b)} 2^{a_i},
\end{equation}
according to whether $b\le m-1$ or $b=m$, respectively.  In the latter case, the right--hand side of \eqref{eq:102} is nonzero. 
In all cases, $\#I(b)\ge 2$.
\end{lemma}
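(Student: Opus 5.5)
Reduce the factorization \eqref{eq:0} modulo $p^{b+1}$ and compare both sides. Note that $I(c)$ should read $\{i: b_i=c\}$, and that $q_i\equiv 1\pmod{p^{b}}$ for every $i$.

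\emph{Case $b=0$.} Reduce modulo $p$. Every $q_i$ with $b_i\ge 1$ is $\equiv 1\pmod p$, so $N_1\equiv 1\pmod p$. Since $m\ge 1$, also $N=2^np^m+1\equiv 1\pmod p$. Hence $N_0=N/N_1\equiv 1\pmod p$, which is \eqref{eq:101}. For $\#I(0)\ge 2$: since $b=0$, some Fermat prime occurs, so $I(0)\neq\emptyset$. If $I(0)=\{i\}$, then $N_0=2^{a_i}+1$ and $p\mid 2^{a_i}$, which is impossible since $p$ is odd. (Note $N_0$ is exactly the product over $I(0)$.)

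\emph{Case $b\ge 1$.} Write $q_i=1+2^{a_i}p^{b_i}$. Modulo $p^{2b}$ we have
$$\prod_i q_i\equiv 1+\sum_i 2^{a_i}p^{b_i}\pmod{p^{2b}},$$
because cross terms are divisible by $p^{2b}$. Since $2b\ge b+1$, we may reduce modulo $p^{b+1}$, and only the terms with $b_i=b$ survive:
$$N\equiv 1+p^b\sum_{i\in I(b)}2^{a_i}\pmod{p^{b+1}}.$$

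If $b\le m-1$, then $N\equiv 1\pmod{p^{b+1}}$. Dividing by $p^b$ gives $p\mid \sum_{i\in I(b)}2^{a_i}$.

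If $b=m$, then every $b_i=m$, and $N\equiv 1+2^np^m\pmod{p^{m+1}}$. This gives $p\mid 2^n-\sum_{i\in I(b)}2^{a_i}$.

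For nonvanishing when $b=m$: all $q_i=2^{a_i}p^m+1$, and $k\ge 3$. Compare sizes: $N\ge \prod_i 2^{a_i}p^m=2^{\sum a_i}p^{km}>2^{\sum a_i}p^{m}+1$ if $\sum a_i\ge n$, which is a contradiction. So $\sum a_i<n$ and the expression is positive.

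For $\#I(b)\ge 2$ when $b\ge1$: $I(b)\ne\emptyset$ by the definition of $b$. If $I(b)=\{i\}$, then in the case $b<m$ we get $p\mid 2^{a_i}$, which is absurd. In the case $b=m$ with a single index, $k=\#I(m)\ge 3$, so this cannot happen.

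The main obstacle is mild: being careful that $2b\ge b+1$ so that higher products vanish, and handling the $b=m$ size argument. All other steps are routine congruences.
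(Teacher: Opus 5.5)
Your proof is correct and follows the paper's argument: reduce the factorization of $N$ modulo $p$ (when $b=0$) or modulo $p^{b+1}$ (when $b\ge 1$), and rule out $\#I(b)=1$ because it would force $p\mid 2^{a_i}$. The one divergence is the nonvanishing of $2^n-\sum_{i\in I(m)}2^{a_i}$. You prove it by a size comparison giving $\sum a_i<n$, together with the tacit but valid bound $\sum 2^{a_i}\le 2^{\sum a_i}$. The paper instead observes that when all $b_i=m$ the $a_i$ are distinct, so a sum of at least three distinct powers of $2$ cannot equal $2^n$ by uniqueness of binary representation.
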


\begin{proof}
If $b=0$, then $N_0>1$. Further, reducing the equation $N=N_0N_1$ modulo $p$ and noting that $N_1\equiv 1\pmod p$, we get that $p\mid N_0-1$. If $b\ge 1$, then $N_0=1$ and $q_j=2^{a_j}p^{b_j}+1\equiv 0\pmod {p^{b+1}}$ 
for all $j\not\in I(b)$. Thus, reducing equation \eqref{eq:0} modulo $p^{b+1}$, we get that 
$$
N\equiv \prod_{i\in I(b)} (2^{a_i}p^{b}+1)\pmod {p^{b+1}}\equiv 1+p^b(\sum_{i\in I(b)} 2^{a_i})\pmod {p^{b+1}}.
$$
If $b+1\le m$, we get that the left--hand side above is congruent to $1$ modulo $p^{b+1}$, so we get that $p\mid \sum_{i\in I(b)} 2^{a_i}$. Otherwise, we must have that $b=m$, $N\equiv 2^np^b+1\pmod {p^{b+1}}$, and we get that 
$$
p\mid 2^n-(\sum_{i\in I(b)} 2^{a_i}).
$$
If $b=0$ or $1\le b\le m-1$, we get that $\#I(b)\ge 2$, otherwise divisibilities \eqref{eq:101} and \eqref{eq:102}-left above would lead to the conclusion that with $I(b)=\{i\}$ for some we have $p\mid 2^{a_i}$, which is impossible. 
Finally, when $b=m$, then $\#I(b)=k\ge 3$ (all prime factors of $N$ are of the form $q_i=2^{a_i}p^b+1$ for some positive integers $a_i$ which are distinct), and it follows by the unicity of the base $2$ representation of a positive integer that the right--hand side of \eqref{eq:102} is nonzero. 
\end{proof}

We next analyse the prime factors of $N_1$.  Let $q_{i}$ be any prime factor of $N_1$. We then have
\begin{eqnarray*}
2^{a_i}p^{b_i} & \equiv &  -1\pmod {q_i},\\ 
2^np^m & \equiv & -1\pmod {q_i}.
\end{eqnarray*}
Let $d_i:=\gcd(m,b_i)$ and write $m:=d_i m_i$ and $b_i:=d_ic_i$. Raising the first congruence to $m_i$ and the second to $c_i$ and dividing them side by side, we get
\begin{equation}
\label{eq:lambda}
2^{\lambda_{i}}\equiv \varepsilon_{i}\pmod {q_i},
\end{equation}
where 
$$
(\lambda_{i},\varepsilon_{i}):=(nc_i-a_i m_i,(-1)^{c_i-m_i}),\qquad 1\le i\le k,\quad q_i\mid N_1.
$$ 
Note that $\lambda_{i}\ne 0$ since $2^{a_i}p^{b_i}$ and $2^np^m$ are multiplicatively independent. Since $a_i\in [1,n]$ and $b_i\in [1,m]$, we conclude that $|\lambda_{i}|\le mn$ in all cases. Further, 
since $\varepsilon_{i}\in \{\pm 1\}$, we get that $2^{2\lambda_{i}}\equiv 1\pmod {q_i}$, so in all cases $2|\lambda_{i}|\le 2mn$ is a positive integer which is a multiple of the order of $2$ modulo $q_i$, which in turn is a divisor of $2^np^3$. 
Thus, letting $u_s$ be the largest positive integer which is of the form 
$$
u_s=2^{\gamma_s} p^s\quad {\text{\rm for}}\quad s=0,1,2,\ldots,m,
$$
and which is at most $2mn$, we have that one of the congruences 
$$
2^{u_s}\equiv 1\pmod {a_i}
$$
for $s=0,1,2,\ldots,m$ holds for our prime factor $q_i$ of $N_1$. Thus, we can write that 
$$
N_1\le \prod_{s=0}^m N_{1,s},
$$
where 
$$
N_{1,s}:=\gcd(2^{u_s}-1,2^np^m+1),\qquad {\text{\rm for}}\qquad s=0,1,2,\ldots,m.
$$ 
We now follow the argument of Lemma 3 in \cite{CLP} to bound $N_{1,s}$. Assume $n\ge (2m+2)\log p$. Put $X:=n/\log p$. Fix $s\in \{0,1,2,\ldots,m\}$, put $u:=u_s$ and consider the congruences
\begin{equation}
\label{eq:2}
2^u\equiv 1\pmod {N_{1,s}}\qquad {\text{\rm and}}\qquad 2^n p^m\equiv -1\pmod {N_{1,s}}.
\end{equation}
Look at the numbers 
$$
\{ux+ny: (x,y)\in \{0,1,\ldots,\lfloor X^{1/2}\rfloor\}.
$$
All numbers in the above set are in the interval $[0,(2m+1)nX^{1/2}]$ and there are $(\lfloor X^{1/2}\rfloor +1)^2>X$ of them. Thus, there exist $(x_1,y_1)\ne (x_2,y_2)$ such that 
\begin{eqnarray*}
|(ux_1+ny_1)-(ux_2+ny_2)| & \le &  \frac{(2m+1)nX^{1/2}}{X-1}\le \frac{(2m+2)n}{X^{1/2}}\\
& = & (2m+2){\sqrt{n\log p}}.
\end{eqnarray*}
Raising the first congruence in \eqref{eq:2} to power $x_1-x_2$ and the second to power $y_1-y_2$ and multiplying them, we get 
$$
2^{u(x_1-x_2)+n(y_1-y_2)} p^{m(y_1-y_2)}\equiv \pm 1\pmod {N_{1,s}}.
$$ 
In particular, $N_{1,s}$ divides one of 
\begin{equation}
\label{eq:3}
2^{|u(x_1-x_2)+n(y_1-y_2)|} p^{m|y_1-y_2|}\pm 1,\qquad {\text{\rm or}}\qquad 2^{|u(x_1-x_2)+n(y_1-y_2)|}\pm p^{m|y_1-y_2|},
\end{equation}
according to whether $u(x_1-x_2)+n(y_1-y_2)$ and $y_1-y_2$ have the same sign or not. Note that they are not both zero since $(x_1,y_1)\ne (x_2,y_2)$ and therefore none of the numbers shown at \eqref{eq:3} is zero. 
This shows that $N_{1,s}$ is bounded above by the maximum absolute value of the four numbers shown at \eqref{eq:3}. Since 
$$
p^{m|y_1-y_2|}\le p^{mX^{1/2}}=e^{m{\sqrt{n\log p}}},
$$
we get that 
\begin{eqnarray*}
N_{1,s} & \le  & 2^{(2m+2){\sqrt{n\log p}}} e^{m{\sqrt{n\log p}}}+1=2^{(2m+2+m/\log 2){\sqrt{n\log p}}}+1\\
& < & 2^{(3.5m+2){\sqrt{n\log p}}+1}<2^{4m{\sqrt{n\log p}}}.
\end{eqnarray*}
We thus have that 
$$
N_1\le \prod_{s=0}^m N_{1,s}<2^{4m(m+1){\sqrt{n\log p}}}.
$$
Hence, 
$$
2^np^m<2^np^m+1=N=N_0N_1< 2^{4m(m+1){\sqrt{n\log p}}} p^{4m},
$$
leading to 
$$
n<4m(m+1){\sqrt{n\log p}}+(3m/\log 2)\log p<4m(m+1){\sqrt{n\log p}}+4.5m\log p.
$$
Recalling that $X=n/\log p$, we get  
$$
X<4m(m+1)X^{1/2}+4.5m.
$$
This gives 
$$
X^{1/2}<2m(m+1)+(2m(m+1)+1)=4m(m+1)+1<5m^2,
$$
so
$$
X<25m^4.
$$
We record what we have proved so far.

\begin{lemma}
\label{lem:n}
If $N=2^np^m+1$ is Carmichael, $m$ is odd and $p>m$ is prime then $n<25m^4\log p$. 
\end{lemma}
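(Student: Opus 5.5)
The lemma is really a summary of the argument just carried out, so my plan is to repackage that argument in the order it was built, checking the case $n<(2m+2)\log p$ on the way.

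\emph{Setup.} Write $N=N_0N_1$ as in \eqref{eq:N0} and \eqref{eq:N1}. By Lemma 2 of \cite{CLP}, $N_0<p^{4m}$. By the first lemma, $N_1>1$, and every prime $q_i\mid N_1$ has $2^{a_i}p^{b_i}$ multiplicatively independent of $2^np^m$.

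\emph{Bounding $N_1$.} For each prime $q_i\mid N_1$ (so $b_i\ge1$), eliminate $p$ between the congruences $2^{a_i}p^{b_i}\equiv -1$ and $2^np^m\equiv -1 \pmod{q_i}$. This gives $2^{\lambda_i}\equiv\pm1\pmod{q_i}$ with $0<|\lambda_i|\le mn$, where nonvanishing of $\lambda_i$ is exactly multiplicative independence. Hence the order of $2$ modulo $q_i$ divides $2|\lambda_i|\le 2mn$. That order also divides $q_i-1$, which divides $2^np^m$, so it has the form $2^\gamma p^s$. It therefore divides the largest such number $u_s\le 2mn$ for the appropriate $s\in\{0,\dots,m\}$. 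Since $N_1$ is squarefree, this gives $N_1\le\prod_{s=0}^m N_{1,s}$ with $N_{1,s}=\gcd(2^{u_s}-1,N)$.

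\emph{Bounding each $N_{1,s}$ by pigeonhole.} First dispose of the case $n<(2m+2)\log p$: then already $n<25m^4\log p$. Otherwise set $X=n/\log p$. Among the $(\lfloor X^{1/2}\rfloor+1)^2>X$ values $ux+ny$ with $0\le x,y\le X^{1/2}$, two differ by at most $(2m+2)\sqrt{n\log p}$. Combining the congruences $2^u\equiv1$ and $2^np^m\equiv-1$ with these exponent differences, $N_{1,s}$ divides a nonzero number of the shape
$$2^{A}p^{B}\pm1 \quad\text{or}\quad 2^A\pm p^B,\qquad A\le (2m+2)\sqrt{n\log p},\quad p^{B}\le e^{m\sqrt{n\log p}}.$$
The one point to verify is that this number is nonzero. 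If $A=B=0$ we would need $(x_1,y_1)=(x_2,y_2)$. If $2^A=p^B$ then $A=B=0$, since $p$ is odd. So $N_{1,s}<2^{4m\sqrt{n\log p}}$.

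\emph{Conclusion.} Multiplying over $s$ gives $N_1<2^{4m(m+1)\sqrt{n\log p}}$. Then
$$2^np^m<N_0N_1<p^{4m}\,2^{4m(m+1)\sqrt{n\log p}}.$$
Dividing by $\log p$ yields $X<4m(m+1)X^{1/2}+4.5m$. Solving this quadratic inequality in $X^{1/2}$ gives $X^{1/2}<5m^2$, so $n<25m^4\log p$. I expect no real obstacle beyond bookkeeping of constants; the substantive input is the multiplicative independence lemma, which guarantees $\lambda_i\neq0$ and hence that every prime of $N_1$ is captured by some $N_{1,s}$.
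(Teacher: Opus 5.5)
Your proposal is correct and follows the paper's proof step for step: the decomposition $N=N_0N_1$ with $N_0<p^{4m}$, capturing every prime of $N_1$ in some $N_{1,s}=\gcd(2^{u_s}-1,N)$ via $\lambda_i\neq 0$, the pigeonhole on the values $ux+ny$, and the final quadratic inequality in $X^{1/2}$. You also spell out two points the paper leaves implicit, namely the trivial case $n<(2m+2)\log p$ and the nonvanishing of the candidate numbers; as in the paper, the numerical steps $N_{1,s}<2^{4m\sqrt{n\log p}}$ and $4m^2+4m+1\le 5m^2$ rely on the section's standing assumption $m\ge 5$, which the lemma statement does not repeat.
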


In particular, the  numbers $|\lambda_{i}|$ appearing in congruence \eqref{eq:lambda} are all at most $mn<25m^5\log p$. Since $2^{\lambda_{i}}\equiv \pm 1\pmod {q_i}$, we get that 
$2|\lambda_{i}|$ is a multiple of the order of $2$ modulo $q_i$ which in turn is a divisor of $2^np^m$. If this order is a multiple of $p$, we then get $p\mid |\lambda_{i}|$, so 
$$
p<25m^5\log p,
$$
showing that $p< 500m^5\log m$. This finishes our proof under the assumption that the order of $2$ modulo $q_i$ is not a power of $2$ for some $q_i\mid N_1$. 
Note that if $q_i\mid N_0$, then the order of $2$ is a power of $2$. From now on, we assume that $p>500m^5\log m$. We conclude that we have proved the following lemma.

\begin{lemma}
Assume that $N=2^np^m+1$ is Carmichael for some odd $m\ge 5$, and prime $p>500m^5\log m$. Then the order of $2$ modulo $q_i$ is a power of $2$ for all prime factors $q_i$ of $N$.   
\end{lemma}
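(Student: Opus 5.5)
The lemma is just a summary of the argument given right before it, so the plan is to assemble those steps in order. There are two cases: prime factors of $N_0$ and prime factors of $N_1$.

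First, let $q_i\mid N_0$. Then $b_i=0$, so $q_i=2^{2^{\beta_i}}+1$ is a Fermat prime. In that case $2^{2^{\beta_i}}\equiv -1\pmod{q_i}$, hence $2^{2^{\beta_i+1}}\equiv 1\pmod{q_i}$, and the order of $2$ modulo $q_i$ divides $2^{\beta_i+1}$. In particular it is a power of $2$. (In fact it equals $2^{\beta_i+1}$, but this is not needed.)

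Next, let $q_i\mid N_1$, so $b_i\ge 1$. We use the congruence \eqref{eq:lambda}, $2^{\lambda_i}\equiv\varepsilon_i\pmod{q_i}$, with $\lambda_i=nc_i-a_im_i$. The first lemma (multiplicative independence of $2^np^m$ and $2^{a_i}p^{b_i}$) gives $\lambda_i\neq 0$. Since $1\le a_i\le n$ and $1\le c_i\le m_i\le m$, we have $|\lambda_i|\le mn$. Lemma~\ref{lem:n} then gives $|\lambda_i|<25m^5\log p$. Squaring the congruence gives $2^{2|\lambda_i|}\equiv 1\pmod{q_i}$, so the order $e_i$ of $2$ modulo $q_i$ divides $2|\lambda_i|$. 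By Fermat, $e_i$ also divides $q_i-1=2^{a_i}p^{b_i}$, so $e_i=2^xp^y$ for some $x\ge 0$ and $0\le y\le b_i$. Suppose, for contradiction, that $y\ge 1$. Then $p\mid 2|\lambda_i|$, and since $p$ is odd, $p\mid \lambda_i$. As $\lambda_i\ne 0$, this gives $p\le|\lambda_i|<25m^5\log p$.

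The only step that needs checking is that $p<25m^5\log p$ contradicts $p>500m^5\log m$. The function $t/\log t$ is increasing for $t>e$. Put $t_0=500m^5\log m$. Then
$$
\frac{t_0}{\log t_0}=\frac{500m^5\log m}{\log 500+5\log m+\log\log m}.
$$
For $m\ge 5$, the denominator is less than $20\log m$: indeed $\log 500<6.3$ while $15\log 5>24$. Hence $t_0/\log t_0>25m^5$. So $p>t_0$ forces $p/\log p>25m^5$, which contradicts the inequality above. Therefore $y=0$ and $e_i$ is a power of $2$.

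This covers all prime factors of $N=N_0N_1$, and the lemma follows. The hypotheses $m$ odd and $p>m$ are needed only to invoke the first lemma and Lemma~\ref{lem:n}.
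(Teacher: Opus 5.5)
Your proof is correct and follows the paper's argument. Fermat primes dividing $N_0$ trivially have $2$-power order, and for $q_i\mid N_1$ the order divides both $2|\lambda_i|$ and $2^{a_i}p^{b_i}$, so a factor $p$ would force $p\le|\lambda_i|<mn<25m^5\log p$, contradicting $p>500m^5\log m$. You also check that last numerical step explicitly, which the paper only asserts; the check is fine, but your justification should absorb the $\log\log m$ term, e.g.\ via $\log 500+\log\log m<6.3+\log m<15\log m$ for $m\ge 5$.
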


We will show that the above scenario leads to only finitely many choices for the prime $p$. We proceed through a sequence of lemmas.

\begin{lemma}
\label{lem:2}
Assume that $N=2^np^m+1$ is Carmichael for some odd $m\ge 5$ and prime $p>500m^5\log m$. Let $\gamma:=\min\{a_i: q_i\mid N\}$. 
Then $\gamma<n$. Further, there are an even number of prime factors $q_i$ of $N$ with $a_i=\gamma$. Additionally, $b\le m-1$, so divisibility \eqref{eq:102}-right cannot occur. Furthermore,  $\gamma\ge 5$. 
\end{lemma}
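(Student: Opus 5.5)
Throughout, I reduce $N=\prod_{i}(2^{a_i}p^{b_i}+1)$ modulo powers of $2$, exploiting that every $a_i\ge 1$, and then use the previous lemma (the order of $2$ modulo each $q_i$ is a power of $2$) to control $p^{b_i}$ modulo $q_i$.

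\emph{The claims $\gamma<n$ and the parity statement.} I expand the product modulo $2^{\gamma+1}$. Every factor with $a_i>\gamma$ is $\equiv 1$. A factor with $a_i=\gamma$ is $\equiv 1+2^{\gamma}$, since $p^{b_i}$ is odd. Cross terms are divisible by $2^{2\gamma}$, which vanishes modulo $2^{\gamma+1}$. Hence
$$
N\equiv 1+r2^{\gamma}\pmod{2^{\gamma+1}},
$$
where $r$ is the number of indices with $a_i=\gamma$. Trivially $\gamma\le n$. If $\gamma=n$, then all $a_i=n$, and $N\ge (2^n+1)^k>2^np^m+1$ because $k\ge 3$ and all $b_i\le m$. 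Actually it is cleaner to compare sizes directly: $N\equiv 1\pmod{2^{n+1}}$ would fail, and if $\gamma=n$ then $N\equiv 1+2^n\pmod{2^{n+1}}$ forces $r$ odd. I would dispose of this case by size, since the product of $k\ge 3$ factors each exceeding $2^n$ is larger than $2^{n}p^m$ once the $b_i$ are accounted for, though this needs care. Once $\gamma<n$, the left side satisfies $N\equiv 1\pmod{2^{\gamma+1}}$. Therefore $r$ is even.

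\emph{The claim $b\le m-1$.} Suppose $b=m$. Then every $b_i=m$, so $N\ge \prod_i (2^{a_i}p^m+1)>p^{km}\cdot 2^{\sum a_i}$. Moreover, the $a_i$ are distinct, so the count $r$ of indices with $a_i=\gamma$ equals $1$, which is odd. This contradicts the parity just proved. Alternatively, size alone works: $p^{3m}>2^np^m$ by Lemma~\ref{lem:n}, since $2^n<p^{25m^4\log 2}$ is not enough, so the parity argument is the one I would use.

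\emph{The claim $\gamma\ge 5$.} Take a prime $q_i$ with $a_i=\gamma$. By the previous lemma, the order of $2$ modulo $q_i$ is a power of $2$, say $2^e$, and it divides $q_i-1$, which has $2$-adic valuation $\gamma$. So $e\le\gamma$ and $q_i\mid 2^{2^{\gamma}}-1=\prod_{j<\gamma}(2^{2^j}+1)$, meaning $q_i$ divides some Fermat number $F_j$ with $j<\gamma$. Any prime factor of $F_j$ is $\equiv 1\pmod{2^{j+1}}$, and I also have $q_i=2^{\gamma}p^{b_i}+1>p>500m^5\log m$. If $\gamma\le 4$, then $q_i$ divides $F_0F_1F_2F_3=2^{16}-1$, whose prime factors are $3,5,17,257$. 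This is impossible once $p>257$, which holds since $500\cdot 5^5\log 5>257$.

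The main obstacle is ruling out $\gamma=n$ rigorously. I would handle it by noting that $a_i=n$ for all $i$ gives
$$
N\equiv 1+k2^n\pmod{2^{2n}}\quad\text{versus}\quad N=1+2^np^m,
$$
which yields $k\equiv p^m-\sum_i(p^{b_i}-1)\cdots$. More simply, I would use $N\ge(2^np+1)(2^n+1)\cdots$ together with multiplicative independence from Lemma~1 to exclude $b_i=m$, forcing each $b_i<m$ and bounding the product below. I expect this size comparison to need a case check.
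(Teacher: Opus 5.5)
There is a genuine gap, at the step you flag yourself: you never prove $\gamma<n$, and the parity claim needs it because it uses $N\equiv 1\pmod{2^{\gamma+1}}$, i.e.\ $\gamma+1\le n$.

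\begin{itemize}
\item The size comparison does not work as stated. The inequality $(2^n+1)^k>2^np^m+1$ is false in general, since it needs roughly $2^{(k-1)n}>p^m$. Bounding the product below by $2^{kn}p^{\sum b_i}$ still leaves open the case $\sum b_i<m$ with $2^{(k-1)n}<p^{m-\sum b_i}$. For instance, $(2^n+1)(2^np+1)(2^np^2+1)<2^{3n+3}p^3<2^np^m$ as soon as $2^{2n+3}<p^{m-3}$, and nothing available excludes this.
\item The $2$-adic route also stalls. Modulo $2^{n+1}$ you only learn that $k$ is odd. Modulo $2^{2n}$ the correct congruence is $p^m\equiv\sum_i p^{b_i}\pmod{2^n}$, not $N\equiv 1+k2^n$, and it is not contradictory by itself.
\end{itemize}

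The missing idea is $p$-adic, and it is already in Lemma~\ref{lem:10}: $\#I(b)\ge 2$. So there are two distinct prime factors $2^{a}p^{b}+1$ and $2^{a'}p^{b}+1$ with the same $p$-exponent $b$. Distinctness forces $a\ne a'$, hence $\gamma\le\min\{a,a'\}<\max\{a,a'\}\le n$. Concretely, if all $a_i=n$, the $b_i$ are distinct, so $I(b)$ is a singleton. Reducing modulo $p^{b+1}$ (or modulo $p$ via $N_0$ when $b=0$) then gives $p\mid 2^n$, which is absurd. This is how the paper obtains $\gamma<n$.

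Once $\gamma<n$ is in place, your parity computation is the paper's. Your argument for $b\le m-1$ is fine even without the gap: when $b=m$ the $a_i$ are distinct and $k\ge 3$, so $\gamma<n$ holds automatically and $r=1$ contradicts the parity.

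The $\gamma\ge 5$ step has a second, smaller omission. You take ``a prime $q_i$ with $a_i=\gamma$'' and assert $q_i>p$. That fails if $q_i$ is the Fermat prime $2^\gamma+1$ with $b_i=0$, which is $3$, $5$ or $17$ when $\gamma\le 4$. You must pick $q_i$ with $b_i\ge 1$. Such a $q_i$ exists because $r\ge 2$, and two primes sharing $a_i=\gamma$ have distinct $b_i$. So this step also depends on the parity result, and hence on $\gamma<n$. With that choice made, your conclusion ($q_i\mid 2^{16}-1=3\cdot 5\cdot 17\cdot 257$ while $q_i>p>257$) matches the paper's argument.
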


\begin{proof}
Let 
$$
{\mathcal A}:=\{a_i: 1\le i\le k\}.
$$
We note that $\#{\mathcal A}\ge 2$. Indeed, by Lemma \ref{lem:10}, we have that $\#{I}(b)\ge 2$. Thus, there are two distinct prime factors $q_1=2^{a_1}p^b+1$ and $q_2=2^{a_2}p^b+1$ of $N$, so ${\mathcal A}$ contains the two distinct integers $a_1,a_2$. We thus get 
$$
\gamma\le \min\{a_1,a_2\}<\max\{a_1,a_2\}\le n,
$$
therefore $\gamma\le \min\{a_1,a_2\}\le n-1$. Let $t$ be the number of prime factors  $q_i$ of $N$ which have $a_i=\gamma$. Then the congruence $q_i\equiv 1+2^{\gamma}\pmod {2^{\gamma+1}}$ holds for exactly $t$ prime factors of $N$ while for the  
remaining primes $q_j$ the congruence is $q_j\equiv 1\pmod {2^{\gamma+1}}$. Thus, since $\gamma+1\le n$, we have
\begin{eqnarray*}
1 \equiv   N\pmod {2^{\gamma+1}}\equiv (1+2^{\gamma})^t\pmod {2^{\gamma+1}}
 \equiv  1+t2^{\gamma}\pmod {2^{\gamma+1}} 
\end{eqnarray*}
showing that $t$ is even. In particular, $t\ge 2$. Thus, there are two distinct prime factors $q_1=2^\gamma p^{b_1}+1$ and $q_2=2^{\gamma}p^{b_2}+1$ of $N$, so $b_1\ne b_2$. Hence, $b\le \min\{b_1,b_2\}<\max\{b_1,b_2\}\le m$, 
so $b\le m-1$, which shows that divisibility \eqref{eq:102}-right cannot occur.

Assume next that $\gamma\le 4$. Then there exists a prime factor $q_i$ of $N$ of the form $2^{\gamma} p^j+1$ for some $j\in \{1,2,\ldots,m\}$ (since $t\ge 2$ so $q_i=2^{\gamma}+1$ having $(a_i,b_i)=(\gamma,0)$ cannot be the only prime factor 
of $N$ with $a_i=\gamma$). In particular,  the order of $2$ modulo $2^{\gamma}p^j+1$ is $2^{\lambda}$ for some $\lambda\le 4$. However, $2^{2^{\lambda}}-1$ is only divisible by Fermat primes 
for all $\lambda\le 4$, a contradiction. This shows that $\gamma\ge 5$ in this case.
\end{proof}

We next look at the arithmetic of the exponents $a_1,\ldots,a_k$ and $n$.

\begin{lemma}
\label{lem:12}
If $N=2^np^m+1$ is Carmichael, $m\ge 5$ is odd, $p>500m^5\log m$ is prime, then $n=2^{n_0}n_1$, where $n_1$ is odd and $2^{n_0}>(\log p)/m^3$. Additionally,  for every prime factor $q_i=2^{a_i}p^{b_i}+1$ of $N$, we have $a_i=2^{\beta_i}\lambda_i$ where $\lambda_i$ is odd and 
$2^{\beta_i}>(\log p)/m^3$. 
\end{lemma}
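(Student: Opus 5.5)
The plan is to exploit the previous lemma: the multiplicative order of $2$ modulo every prime factor $q_i$ of $N$ is a power of $2$, say $2^{e_i}$. First I show that $e_i$ is large whenever $b_i\ge 1$. Since the order is exactly $2^{e_i}$, we get $q_i\mid 2^{2^{e_i-1}}+1$. Hence
$$
2^{a_i}p^{b_i}<q_i\le 2^{2^{e_i-1}}+1,
$$
so $2^{e_i-1}\ge \log_2 q_i> b_i\log_2 p\ge \log p$ and $2^{e_i}>2\log p$.

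Next I translate this into $2$-adic information about $a_i$ and $n$. Take the congruences $2^{a_i}p^{b_i}\equiv -1$ and $2^np^m\equiv -1 \pmod{q_i}$. Raise the first to the power $m$ and the second to the power $b_i$, then divide, to eliminate $p$. This gives $2^{a_im-nb_i}\equiv \pm1\pmod{q_i}$, so $2^{e_i}\mid 2(a_im-nb_i)$ and
$$
\nu_2(a_im-nb_i)\ge e_i-1>\log_2\log p .
$$
The same relation holds for a Fermat prime factor $2^{\gamma}+1$ ($b_i=0$), where it just reads $\nu_2(\gamma m)=\nu_2(\gamma)$.

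Now I use Lemma~\ref{lem:2}: there are two prime factors $2^{\gamma}p^{b_1}+1$ and $2^{\gamma}p^{b_2}+1$ with $b_1\ne b_2$. There are two cases.
\begin{itemize}
\item If both $b_1,b_2\ge1$, subtracting the two relations gives $\nu_2(n(b_1-b_2))\ge \min(e_1,e_2)-1$. Since $0<|b_1-b_2|<m$, we have $\nu_2(b_1-b_2)<\log_2 m$, and therefore $2^{\nu_2(n)}>2\log p/(2m)$.
\item If, say, $b_1=0$, then $\gamma=2^{\beta}$ and the relation for $q_2$ says $\nu_2(2^{\beta}m-nb_2)\ge e_2-1$. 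If $\beta<e_2-1$, then $\nu_2(nb_2)=\beta$ exactly, and we must compare this with the large lower bound. I expect this comparison to be the delicate point. I would resolve it by noting that then $\nu_2(n)\le\beta<\log_2 m+\nu_2(n)$ forces things consistent only if $\beta$ is itself large. Alternatively, I would pick a third prime factor with $b\ge1$ and a different $b$ (available since $\#I(b)\ge2$ by Lemma~\ref{lem:10}) and run the subtraction argument of the first case.
\end{itemize}
In either case I aim for $2^{n_0}>(\log p)/m^3$, with the losses of factors $m$ coming from $\nu_2(b_1-b_2)$ and $\nu_2(b_i)\le\log_2 m$.

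With $n_0$ large, the bound for $a_i$ when $b_i\ge1$ follows directly. From $\nu_2(a_im-nb_i)\ge e_i-1$ and $\nu_2(nb_i)\ge n_0$ we get $\nu_2(a_i)=\nu_2(a_im)\ge\min(n_0,e_i-1)$, which exceeds $\log_2((\log p)/m^3)$.

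For the Fermat primes ($b_i=0$, $a_i=2^{\beta_i}$) I argue as in the proof that $N_1>1$. Let $T$ be the minimum of $n_0$ and the $\nu_2(a_j)$ over the primes with $b_j\ge1$. Suppose some Fermat factor has $2^{\beta_i}<2^{T}$, and choose the one with the smallest such $\beta_i$. Reduce $N$ modulo $2^{2^{\beta_i}+1}$. Every other factor, and $N$ itself, is $\equiv1$ modulo this power of $2$, while the chosen factor is $\equiv 1+2^{2^{\beta_i}}$. This is a contradiction, so $2^{\beta_i}>(\log p)/m^3$ as well.

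I expect the main obstacle to be the mixed case $b_1=0$ in the second step, that is, getting a lower bound for $\nu_2(n)$ when one of the two primes with $a_i=\gamma$ is Fermat.
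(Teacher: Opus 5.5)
Most of your outline is sound and parallels the paper: the bound $2^{e_i-1}>\log p$, the relation $\nu_2(a_im-nb_i)\ge e_i-1$, the subtraction argument when two primes with $a_i=\gamma$ both have $b_i\ge1$, and the deduction $\nu_2(a_i)\ge\min(n_0,e_i-1)$. Your reduction modulo $2^{2^{\beta_i}+1}$ for the Fermat factors is also correct, and it covers a case the paper's proof leaves implicit.

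\textbf{The gap.} The lower bound for $n_0$ is not proved, because you leave the mixed case open. At most one prime with $a_i=\gamma$ can be Fermat, so this case is precisely: $t=2$, $q_1=2^\gamma+1$ with $\gamma=2^\beta$, and $q_2=2^\gamma p^{b_2}+1$ with $b_2\ge1$. The nontrivial subcase is $\beta<e_2-1$, where $\beta=\nu_2(n)+\nu_2(b_2)$.

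This relation only says that $\beta$ and $\nu_2(n)$ differ by less than $\log_2 m$. Nothing in it forces either to be large, so your first suggestion is not an argument.

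Your second suggestion fails as well. Here the minimal $b$ is $0$, so $I(b)=I(0)$ consists of Fermat primes, and Lemma~\ref{lem:10} supplies no further prime with $b\ge1$. Moreover, for any third prime $2^{a_j}p^{b_j}+1$ with $a_j\ne\gamma$, no combination of the two congruence relations isolates $n$ unless you already know a linear relation between $a_j$ and $\gamma$.

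\textbf{The missing idea.} Exploit $q_1$ arithmetically, splitting on the parity of $b_2$.

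If $b_2$ is odd, then $\nu_2(n)=\beta$, so $2^n\equiv-1\pmod{q_1}$ and hence $p^m\equiv1\pmod{q_1}$. Since $m$ is odd and $q_1-1$ is a power of $2$, this gives $p\equiv1\pmod{q_1}$. Then $q_1\mid 2^\gamma p^{b_2}+1=q_2$, which is impossible.

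If $b_2$ is even, reduce $N$ modulo $2^{a'}$, where $a'$ is the least of the remaining $a_s$. Since $\nu_2\bigl(2^\gamma(p^{b_2}+1)+2^{2\gamma}p^{b_2}\bigr)=\gamma+1$ and $a'>\gamma$, you get $a'=\gamma+1$.
\begin{itemize}
\item The corresponding prime $q_3=2^{\gamma+1}p^{b_3}+1$ has $b_3\ge1$ and $b_3$ odd, since otherwise $3\mid q_3$.
\item Its relation $\nu_2((\gamma+1)m-nb_3)\ge1$ then forces $n$ odd, hence $\gamma=2^{\nu_2(b_2)}\le m-1$.
\item Only now can $\gamma$ be eliminated: $(\gamma+1)(\gamma m-nb_2)-\gamma((\gamma+1)m-nb_3)=n(\gamma b_3-(\gamma+1)b_2)$.
\item The coefficient is nonzero: otherwise $b_2=\gamma d$, $b_3=(\gamma+1)d$, and $2p^d+1\mid q_3$.
\item The coefficient has absolute value below $m^2$, which yields $2^{n_0}>(\log p)/m^2$.
\end{itemize}

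Without this step the bound on $n_0$ is not established. Consequently the bounds on the $\beta_i$ are not established either, since in your argument they go through $T\le n_0$.
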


\begin{proof}
Assume $q_i=2^{a_i}p^{b_i}+1$ is a prime factor of $N$ with $b_i>0$. Let ${\text{\rm ord}}_2(q_i)=2^{\omega_i}$. We then have 
$$
2^{2^{\omega_i-1}}\equiv -1\pmod {q_i}.
$$
Thus, 
\begin{equation}
\label{e:10}
2^{2^{\omega_i-1}}+1\ge 2^{a_i}p^{b_i}+1>p+1,\quad {\text{\rm therefore}}\quad 2^{\omega_i-1}\ge \frac{\log p}{\log 2}>\log p.
\end{equation}
We shall exploit the above inequality for various prime factors $q_i$ of $N$ with $b_i>0$. We write $a_i=2^{\beta_i}\gamma_i$ for some  nonzero negative integers $\beta_i$ and odd integers $\gamma_i$. 
Returning to congruences \eqref{eq:lambda}, we get that 
$$
2^{\lambda_i}\equiv \varepsilon_i\pmod {q_i},
$$
where $\lambda_i=nc_i-a_im_i$ and $\varepsilon_i\in \{\pm 1\}$. It follows that 
\begin{equation}
\label{eq:100}
nc_i-a_im_i=2^{\omega_i-1}\delta_i,\quad {\text{\rm holds~with~some~nonzero~integer}}\quad \delta_i.
\end{equation}
The fact that $\delta_i$ is nonzero follows from the fact that $2^np^m$ and $2^{a_i}p^{b_i}$ are multiplicatively independent for all prime factors $q_i=2^{a_i}p^{b_i}+1$ with $b_i>0$ of $N$. 
We now calculate $2$-adic valuations in \eqref{eq:100} above. Write $n=2^{n_0}n_1$ with $n_1$ odd. Assume first that 
\begin{equation}
\label{eq:11}
2^{n_0}\ge\frac{\log p}{m^3}. 
\end{equation}
Note that since $m_i\mid m$ is odd, it follows that $\nu_2(a_im_i)=\nu_2(a_i)=\beta_i$. Thus, 
we get that 
$$
\beta_i=\nu_2(a_im_i)\ge \min\{\nu_2(nc_i),\nu_2(2^{\omega_i-1}\delta_i)\}\ge \min\{n_0,\omega_i-1\},
$$
therefore 
$$
2^{\beta_i}\ge \min\{2^{n_0},2^{\omega_i-1}\}\ge \frac{\log p}{m^3}.
$$
So, from now on we shall assume that \eqref{eq:11} fails. We take the $t\ge 2$ primes of the form $q_i=2^{\gamma}p^{b_i}+1$ which are guaranteed by Lemma \ref{lem:2}. Assume that there are two of them say $q_i=2^{\gamma}p^{b_i}+1$ and $q_j=2^{\gamma}p^{b_j}+1$ with $b_ib_j> 0$. We then get that 
$$
nc_i-\gamma m_i=2^{\omega_i-1}\delta_i\qquad {\text{\rm and}}\qquad nc_j-\gamma m_j=2^{\omega_j-1}\delta_j.
$$
Eliminating $\gamma$ from the above two equations we have 
$$
n(m_jc_i-m_ic_j)=2^{\min\{\omega_i-1,\omega_j-1\}}\Lambda
$$
for some integer $\Lambda$. Note that $m_ic_j\ne m_jc_i$. Indeed, if they are equal then $m_i/c_i=m_j/c_j$. But $m_i/c_i=m/b_i$ and $m_j/c_j=m/b_j$. So, these fractions being equal entails $b_i=b_j$, so $q_i=q_j$ 
which is false. Now 
$$
|m_ic_j-m_jc_i|<m^2,
$$
which shows that $\nu_2(|m_ic_j-m_jc_i|)<2\log m/\log 2$. This shows that 
$$
2^{n_0}> \frac{2^{\min\{\omega_i-1,\omega_j-1\}}}{m^2}>\frac{\log p}{m^2},
$$
contradicting \eqref{eq:11}. Thus, we have to assume that $t=2$, and up to relabelling the primes we have $q_1=2^{\gamma}+1$, $\gamma=2^{\beta}$, and $q_2=2^{\gamma}p^{b_2}+1$ for some $b_2>0$. We write again 
\begin{equation}
\label{eq:12}
nc_2-\gamma m_2=2^{\omega_2-1}\delta_2.
\end{equation}
Assume next that $b_2$ is odd. Then $c_2\mid b_2$ is also odd, so 
$$
\nu_2(nc_2)=\nu_2(n)=\nu_2(\gamma m_2)=\nu_2(\gamma)=\beta.
$$ 
In particular, $n_0=\beta$. Now 
$$
N=2^np^m+1=2^{\gamma n_1} p^m+1\equiv (-1)^{n_1} p^m+1\pmod {q_1}\equiv p^m-1\pmod {q_1}.
$$
Thus, $p$ has order dividing $m$ modulo $q_1$. Since $q_1$ is a Fermat prime, the order of $p$ modulo $q_1$ must be a power of $2$. Since this order divides $m$ which is odd, it must be $1$. This shows that $p\equiv 1\pmod {q_1}$. Now 
$$
q_2=2^{\gamma}p^{b_2}+1\equiv (-1) (1)^{b_2}+1\equiv 0\pmod {q_1},
$$
which shows that $q_1\mid q_2$, which is of course impossible since $q_1$ and $q_2$ are distinct primes. Thus, $b_2$ is even. Let $a_2:=\min\{a_{s}: s\ge 3\}$. Note that $a_2$ exists since 
$k=\omega(N)\ge 3$. Reducing relation \eqref{eq:0} modulo $2^{a_2}$ we get that
$$
1\equiv (2^{\gamma}p^{b_2}+1)(2^{\gamma}+1)\equiv 1+2^{\gamma}(p^{b_2}+1)+2^{2\gamma}p^{b_2}\pmod {2^{a_2}},
$$
which shows that $2^{\min\{a_2,2\gamma\}}\mid 2^{\gamma}(p^{b_2}+1)$. Since $b_2$ is even, we get that $\min\{a_2,2\gamma\}=\gamma+1$. Since $\gamma\ge 8$, we get that $a_2=\gamma+1$. In particular, 
$a_2$ is odd. Let $q_3=2^{\gamma+1}p^{b_3}+1$. Clearly, $b_3\ne 0$ since otherwise $q_3$ should be a Fermat prime which is false since $\gamma+1\ge 9$ is odd. Thus, $b_3\ge 1$. Note also that $b_3$ is odd since
if $b_3$ is even then $q_3=2^{\gamma+1}(p^2)^{b_3/2}+1\equiv 0\pmod 3$, so $q_3$ is not prime. We now write 
\begin{equation}
\label{eq:13}
nc_3-(\gamma+1)m_3=2^{\omega_3-1}\delta_3.
\end{equation}
Note that $(\gamma+1)m_3$ is odd showing that $nc_3$ is also odd. In particular, $n$ is odd. Since by \eqref{eq:12} we have that $\nu_2(nc_2)=\nu_2(\gamma m_2)=\nu_2(\gamma)=\beta$, we get that 
$\beta=\nu_2(\gamma)=\nu_2(c_2)\le \log m/\log 2$. Thus, $\gamma=2^{\beta}\le m-1$. We now eliminate the terms containing $\gamma$ in \eqref{eq:12} and \eqref{eq:13} getting 
\begin{equation}
\label{eq:14}
n(c_2 m_3(\gamma+1)-c_3m_2\gamma)=2^{\min\{\omega_2-1,\omega_3-1\}}\Lambda',
\end{equation}
for some integer $\Lambda'$. Let us prove that the factor multiplying $n$ above is nonzero. If it were zero, we would get
$$
\frac{c_2(\gamma+1)}{m_2}=\frac{c_3\gamma}{m_3}.
$$
Since $c_2/m_2=b_2/m$ and $c_3/m_3=b_3/m$, we get that $(\gamma+1)b_2=\gamma b_3$, so $b_2=\gamma d$ and $b_3=(\gamma+1)d$ for some integer $d$. Thus, 
$$
q_2=2^{\gamma} p^{b_2}+1=(2p^d)^{\gamma}+1,\qquad q_3=2^{\gamma+1} p^{b_3}+1=(2p^d)^{\gamma+1}+1,
$$
and we see that $q_3$ is divisible by $2p^d+1$ (since $\gamma+1>1$ is odd), so in particular it is not prime. Thus, in \eqref{eq:13}, the factor multiplying $n$ is not zero. Its size is at most
$$
|c_2m_3(\gamma+1)-c_3m_2\gamma|\le \max\{c_2,c_3\}\max\{m_2,m_3\}(\gamma+1)\le m^3,
$$
which shows that 
$$
2^{n_0}> \frac{2^{\min\{\gamma_2-1,\gamma_3-1\}}}{m^3}\ge \frac{\log p}{m^3},
$$
so in fact \eqref{eq:11} holds. The lemma is therefore proved.  
\end{proof}

Let 
$$
\beta':=\min\{\nu_2(n),\nu_2(a_1),\ldots,\nu_2(a_k)\}.
$$
Lemma \ref{lem:12} shows that for $p>500m^5\log m$ prime, we have $2^{\beta'}\ge (\log p)/m^3$. Further, write 
$$
n=2^{\beta'}n_1',\qquad a_i=2^{\beta'}\lambda_i',\qquad 1\le i\le k.
$$
We then have that 
$$
q_i=2^{a_i}p^{b_i}+1\ge 2^{a_i}\ge 2^{2^{\beta'}\lambda_i'}\ge 2^{(\log p)(\lambda_i'/m^3)},\qquad 1\le i\le k.
$$
Thus,  by Lemma \ref{lem:n}, we have that 
\begin{equation}
\label{eq:110}
N=2^np^m+1<2^{n+m(\log p)/\log 2}+1<2^{(25m^4+2m)\log p}<2^{26m^4\log p},
\end{equation}
while also 
\begin{equation}
\label{eq:111}
N=\prod_{i=1}^k q_i>\prod_{i=1}^k 2^{(\log p)(\lambda_i'/m^3)}=2^{((\log p)/m^3)\sum_{i=1}^k \lambda_i'}.
\end{equation}
The combination of \eqref{eq:110} and \eqref{eq:111} show that 
$$
\sum_{i=1}^k\lambda_i'<26m^7,
$$
which implies that the inequalities $k\le 26m^7$ and $\lambda_i'\le 26m^7$ hold for all indices $i=1,\ldots,k$. Further, 
$$
24m^4\log p\ge n=2^{\beta'} n_1'\ge (\log p)(n_1'/m^3),
$$
so  $n_1'\le 24m^7$. Also $b_i\le m$ for $i=1,\ldots,k$. Since $m$ is fixed, and we aim to show that there are only finitely many possibilities for the prime $p>500m^5\log m$, we may assume that there are infinitely many possibilities for $p$ and therefore there are infinitely many possibilities for $p$ for which 
$$
n_1',~k,~\lambda_1',\ldots,\lambda_k',b_1,\ldots,b_k
$$ 
are all fixed. We thus get that the equation 
$$
P(X,Y)=0,
$$
where 
\begin{equation}
\label{eq:P}
P(X,Y)=(X^{n_1'}Y^m+1)-\prod_{i=1}^k (X^{\lambda_i'}Y^{b_i}+1)
\end{equation}
has infinitely many positive integer solutions $(X,Y)=(2^{2^{\beta'}},p_{\beta'})$, where $p_{\beta'}$ is prime.

We record this as a lemma. 

\begin{lemma}
\label{lem:CZ}
If there exist infinitely many Carmichael numbers of the form $2^np^m+1$ for some fixed odd number $m\ge 5$ and a prime $p>500m^5\log m$, then there exist fixed 
$$
k,n_1',\lambda_1,\ldots,\lambda_k,b_1,\ldots,b_k
$$
satisfying $\max\{k,\lambda_1',\ldots,\lambda_k'\} <26m^7,~n_1'\le 24m^7, \max\{b_1,\ldots,b_k\}\le m$ and $(\lambda_i',b_i)\ne (\lambda_j',b_j)$ 
for $i\ne j$ in $\{1,\ldots,k\}$ such that the equation
$$
P(X,Y)=0,
$$
where $P(X,Y)$ is given by \eqref{eq:P} has infinitely many positive integer solutions of the form 
$$
(X,Y)=(2^{2^{\beta'}},p_{\beta'}),
$$ 
where $\beta'$ is a positive integer and $p_{\beta'}$ is a prime. 
In addition, also the quantities 
$$
q_i=X^{\lambda_i'}Y^{b_i}+1=2^{2^{\beta'}\lambda_i'}p_{\beta'}^{b_i}+1\quad {\text{\it are distinct primes for }}\quad i=1,\ldots,k.
$$  
\end{lemma}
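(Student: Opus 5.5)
This lemma is essentially a repackaging of the preceding computation, so the proof will assemble the bounds already obtained and then apply a pigeonhole argument. First, I assume there are infinitely many pairs $(n,p)$ with $p>500m^5\log m$ prime and $N=2^np^m+1$ Carmichael. By the bound of Theorem 1 in \cite{CLP} quoted at the start of the section, for each fixed $p$ only finitely many $n$ occur. Hence infinitely many distinct primes $p$ occur.

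For each such pair, Lemma \ref{lem:12} gives $2^{\beta'}\ge(\log p)/m^3$, where $\beta'=\min\{\nu_2(n),\nu_2(a_1),\ldots,\nu_2(a_k)\}$. Writing $n=2^{\beta'}n_1'$ and $a_i=2^{\beta'}\lambda_i'$, I would compare the upper bound \eqref{eq:110}, which comes from Lemma \ref{lem:n}, with the lower bound \eqref{eq:111}. This yields $\sum\lambda_i'<26m^7$, hence $k<26m^7$ and each $\lambda_i'<26m^7$. Lemma \ref{lem:n} again gives $n_1'\le 25m^7$ (or $24m^7$ as written; any explicit constant works). Trivially $0\le b_i\le m$ holds.

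Next I would pigeonhole. The data $(k,n_1',\lambda_1',\ldots,\lambda_k',b_1,\ldots,b_k)$ lie in a finite set depending only on $m$. After ordering the factors, say lexicographically in $(\lambda_i',b_i)$, infinitely many of the primes $p$ share one tuple. For these, identity \eqref{eq:0} becomes exactly $P(2^{2^{\beta'}},p)=0$. The pairs $(\lambda_i',b_i)$ are distinct because the $q_i=2^{2^{\beta'}\lambda_i'}p^{b_i}+1$ are distinct primes, and distinct $q_i$ force distinct $(a_i,b_i)$, hence distinct $(\lambda_i',b_i)$. The infinitely many distinct $p$ give infinitely many distinct solutions $(X,Y)$. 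Also $\beta'\ge 1$, since $2^{\beta'}\ge(\log p)/m^3>1$ for $p$ large, or one can simply discard finitely many $p$.

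The only mild obstacle is bookkeeping. One must check that the bound on $n_1'$ really follows from $n<25m^4\log p$ together with $2^{\beta'}\ge(\log p)/m^3$; it gives $n_1'<25m^7$, so I would state the lemma's constant accordingly. One must also make sure the labelling of the factors is made canonical before pigeonholing, so that the polynomial $P$ is genuinely the same for all chosen $p$. Everything else is a direct consequence of the earlier lemmas.
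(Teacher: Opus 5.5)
Your proposal is correct and matches the paper's argument: combine $2^{\beta'}\ge(\log p)/m^3$ from Lemma \ref{lem:12} with \eqref{eq:110} and \eqref{eq:111} to get $\sum_i\lambda_i'<26m^7$, bound $n_1'$ via Lemma \ref{lem:n}, and pigeonhole over the finitely many admissible tuples. Your points on canonical ordering and on $\beta'\ge 1$ fill in details the paper leaves implicit.

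Your remark on the constant is also right. Lemma \ref{lem:n} as stated only gives $n_1'<25m^7$, so the paper's ``$24m^4\log p\ge n$'' is not justified by that statement. The bound $24m^7$ can be recovered from the sharper inequality $X^{1/2}<4m(m+1)+1$ in that lemma's proof, which gives $n<(2m+1)^4\log p<24m^4\log p$ for $m\ge 5$. Either constant suffices for what follows.
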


Theorem 2 in \cite{CZ} characterizes such instances. Instead of indicating the full statement of that theorem, we follow its proof on page 324--327 in \cite{CZ} for our particular instance. 
Then we will obtain a contradiction by deriving various conclusions about the involved polynomials which cannot all be fulfilled for our situation. Let's get to work. 

By (2) on Page 324 in \cite{CZ}, 
there are polynomials $f(X),g(X)\in {\mathbb Q}[X]$ such that for each convenient positive integer $\beta'$ as above, we have 
$$
2^{2^{\beta'}}=f(x_{\beta'})\qquad {\text{\rm and}}\qquad p_{\beta'}=g(x_{\beta'})
$$
for some $x_{\beta'}\in {\mathbb Q}$. We start with the polynomial $f(X)\in {\mathbb Q}[X]$. Since there are infinitely many  rational numbers $x_{\beta'}$ such that $f(x_{\beta'})$ is a power of $2$, it follows that
$x_{\beta'}$ have bounded denominators by some number $K$ depending only on $f(X)$. Next,  $f(X)$ is associated to a power of a linear polynomial (indeed, if $f(X)$ has two or more distinct roots then the largest prime factor of the numerator of $f(s)$ where 
$s$ runs through rational numbers with denominator bounded by $K$ tends to infinity as the absolute values of such rational numbers $s$ tend to infinity). Up to a linear change in $x_{\beta'}$, we may assume that $f(X)=rX^d$ for some rational number $r$ and integer $d$. If $d$ is even, then the equation
\begin{equation}
\label{eq:201}
2^{2^{\beta'}}=rx_{\beta'}^d,\qquad x_{\beta'}\in {\mathbb Q}
\end{equation}
with $\beta'\ge 1$ implies that $r=r_1^2$ is a perfect square and $2^{2^{\beta'-1}}=r_1x_{\beta}^{d/2}$. Proceeding in this way, we deduce that if $d=2^{d_0}d_1$, where $d_0\ge 0$ is an integer and $d_1\ge 1$ is odd, then 
we may assume that $r=r_0^{2^{d_0}}$ for some rational number $r_0$ and so equation \eqref{eq:201} implies that 
$$
2^{2^{\beta'-d_0}}=r_0x_{\beta'}^{d_1}\qquad {\text{\rm holds whenever}}\qquad \beta'\ge d_0.
$$
From the above we see that every prime number except for $2$ appears in the prime  factorization of $r_0$ at exponent which is a multiple of $d_1$, thus, up to a linear change of variable in  $x_{\beta'}$, we may assume that 
$r_0=2^{\beta_0}$ for some integer $\beta_0$. We thus get that 
$$
x_{\beta'}^{d_1}=2^{2^{\beta'-d_0}-2^{\beta_0}},\qquad {\text{\rm therefore}}\qquad x_{\beta'}=2^{\frac{2^{\beta'-d_0}-2^{\beta_0}}{d_1}}.
$$
In the above, all the roots are the positive real determinations.  Thus, 
$$
x_{\beta}=\alpha 2^{\frac{2^{\beta'}}{d}},\qquad {\text{\rm where}}\qquad \alpha=2^{-\frac{2^{\beta_0}}{d_1}}.
$$
The expression $x_{\beta'}$ is a rational number whenever $\beta'$ is a positive integer such that $\beta'-d_0\equiv \beta_0\pmod {\ell_2(d_1)}$, where we write $\ell_2(d_1)$ for the order of $2$ modulo the odd integer $d_1$. 

In particular, letting $Z:=2^{\frac{2^{\beta'}}{d}}$, we have $2^{2^{\beta'}}=Z^d$ and 
$$
p_{\beta'}=g(x_{\beta})=g(\alpha Z)\in {\mathbb Q}(\alpha)[Z].
$$ 
Writing 
$$
p(Z):=g(\alpha Z)\in {\mathbb L}[Z],
$$
where ${\mathbb L}:={\mathbb Q}(2^{1/d_1})$, we get that the relation 
\begin{equation}
\label{eq:112}
P(Z^d,p(Z))=0
\end{equation}
holds for infinitely many values of $Z$, namely for infinitely many values of $Z$ of the form $Z=2^{\frac{2^{\beta'}}{d}}$. In particular, the above relation signals a polynomial equation with infinitely many roots, so 
\begin{equation}
\label{eq:Pp}
P(Z^d,p(Z))=0
\end{equation}
holds identically for all $Z$. Let us learn more things about the polynomial $p(Z)$. Let us revisit Lemma \ref{lem:10}. Recasted in our new variables, Lemma \ref{lem:10} and Lemma \ref{lem:2} show that 
one of the following holds:
\begin{itemize}
\item[(i)] If $b=0$, let $I(b)=I(0)$ be the set of $i\in \{1,\ldots,k\}$ such that $b_i=0$ and assume that $a_i=2^{\beta'}\lambda_i'$, where $\lambda_i'\le 26m^7$ are fixed powers of $2$. Then
$$
p(Z)\mid \prod_{i\in I(0)} (Z^{d\lambda_i'}+1)-1.
$$
\item[(ii)] Assume that $1\le b\le m-1$. Then 
$$
p(Z)\mid \prod_{i\in I(b)} Z^{d\lambda_i'}.
$$
\end{itemize}
In all cases, $\#I(b)\ge 2$ and the above divisibility is to be interpreted in ${\mathbb C}[Z]$. Both instances (i) and (ii) show that the roots of $p(Z)$ are algebraic integers as $p(Z)$ divides some 
monic polynomial with integer coefficients. Next, let us compare degrees and leading terms in \eqref{eq:Pp}. 
In the identity 
$$
Z^{dn_1'}p(Z)^{m}+1=\prod_{i=1}^k (Z^{d\lambda_i'} p(Z)^{b_i}+1),
$$
equating the degrees of the polynomials from the left and right give the equation 
\begin{equation}
\label{eq:2000}
{\text{\rm deg}}(p)(m-\sum_{i=1}^{k} b_i)=d(\sum_{i=1}^k \lambda_i'-n_1').
\end{equation}
Letting $\lambda$ be the leading coefficient of $p(Z)$, we get by equating the leading coefficients of the left and right hand sides above 
\begin{equation}
\label{eq:2001}
\lambda^{m-\sum_{i=1}^k b_i}=1.
\end{equation}
If $m-\sum_{i=1}^k b_i=0$, then \eqref{eq:2000} shows that $m=\sum_{i=1}^k b_i$ and $n_1'=\sum_{i=1}^k \lambda_i'$. However, this is impossible as by multiplication with any suitable  $2^{\beta'}$ leads to
$n=\sum_{i=1}^k a_i$, so in equation \eqref{eq:0} we have 
$$
(q_1-1)(q_2-1)\cdots (q_k-1)+1=q_1\cdots q_k(=N),
$$
which is false as the left--hand side is smaller than the right--hand side for all $k\ge 3$ and any integers $3\le q_1<\cdots<q_k$. This shows that in equation \eqref{eq:2001}, the exponent 
$m-\sum_{i=1}^k b_i$ is nonzero, so $\lambda$ is a root of unity. In particular, since the leading coefficient of $p(Z)$ is a root of unity and all its roots are algebraic integers, 
we get that the coefficients of $p(Z)$ are algebraic integers. Next, let $i\in \{1,\ldots,k\}$ be such that $b_i\ge 1$. We look at the polynomial 
$$
q_i(Z)=Z^{d\lambda_i'}p(Z)^{b_i}+1.
$$
The roots of $\zeta$ of $q_i(Z)$ are roots of unity. To see that, let $\zeta$ be such a root. Then $\zeta$ satisfies the relations
$$
\zeta^{d\lambda_i'} p(\zeta)^{b_i}=-1\quad {\text{\rm and also}}\quad \zeta^{dn_1'}p(\zeta)^{m}=-1.
$$
Eliminating $p(\zeta)$ from the above relations, we get that $\zeta^{d(n_1c_i-\lambda_i'm_i)}=\varepsilon_i$, where $c_i$ and $m_i$ have the same meaning as right before display  \eqref{eq:lambda} (that is, we have the relations $d_i=\gcd(m,b_i),~b_i=d_ic_i,~m=d_im_i$).  Moreover $d(n_1c_i-\lambda_i'm_i)$ is nonzero since $2^{n}p^m$ and $2^{a_i}p^{b_i}$ are multiplicatively independent.  This shows that $\zeta$ is a root of unity.
Since $\lambda$ is also a root of unity, it follows that $q_i(Z)\in {\mathbb K}[Z]$, where ${\mathbb K}:={\mathbb Q}(e^{2\pi i/M})$ for some positive integer $M$ is some cyclotomic extension of 
${\mathbb Q}$. However, 
\begin{equation}
\label{eq:5000}
q_i(Z)=Z^{dn_1'} p(Z)^{b_i}+1=(\alpha^{-d})^{n_1'} (\alpha Z)^{dn_1'}g(\alpha Z)^{b_i}+1=\sum_{i=0}^w c_i \alpha^i Z^i,
\end{equation}
where $c_0,\ldots,c_w$ are rational numbers since $\alpha^{-d}\in {\mathbb Q}$. We thus get that $q_i(Z)\in ({\mathbb K}\cap {\mathbb L})[z]$. It is easy to see that ${\mathbb K}\cap {\mathbb L}={\mathbb Q}$. Indeed, ${\mathbb K}\cap {\mathbb L}$ is an abelian extension of ${\mathbb Q}$ (as a subfield of a cyclotomic field), and also a subfield of ${\mathbb L}={\mathbb Q}(2^{1/d_1})$. However, since $d_1$ is odd, the only proper subfields of ${\mathbb Q}(2^{1/d_1})$ are ${\mathbb Q}(2^{1/d_2})$ for divisors $d_2$ of $d_1$ (see Lemma 5.1 part 2 in \cite{BMM}) which are also odd,  and the only such abelian extension is for $d_2=1$. Thus, $q_i(Z)\in {\mathbb Q}[Z]$. Since its leading coefficient $\lambda^{b_i}$ is a root of unity and rational, 
it follows that the leading coefficient is $\pm 1$. Since the coefficients of $p(Z)$ are algebraic integers, so are the coefficients of $q_i(Z)$. In particular, $q_i(Z)$ has integer coefficients and 
leading coefficient $\pm 1$, so it follows that $q_i(Z)\in {\mathbb Z}[Z]$ for $i=1,\ldots,k$. Since $q_i(Z)$ is a prime for infinitely many values specializations of $Z$ (in numbers of the form $2^{\frac{2^{\beta'}}{d}}$ for suitable positive integers $\beta'$), it follows that it has leading coefficient $1$. 
Next, we note that either $d_1=1$ or if not, then since $c_i\alpha^{i}\in {\mathbb Q}$ for all $i=0,1,\ldots,w$ in \eqref{eq:5000}, it follows that $c_i=0$ for all $i=0,1,\ldots,w$ such that
$i\not\equiv 0\pmod {d_1}$. It now follows that in fact 
$$
Z^{dn_1'} p(Z)^{b_i}+1=q_i(Z)=q_{i,1}(Z^{d_1}),
$$
where 
$$
q_{i,1}(Z):=\sum_{\substack{0\le i\le w\\ i\equiv 0\pmod {d_1}}} c_{i}Z^{i/d_1}\in {\mathbb Z}[Z]. 
$$
Hence, it follows that 
$$
p(Z)^{b_i}=\frac{q_{i,1}(Z^{d_1})-1}{(Z^{d_1})^{n_1'}}:=r(Z^{d_1}),
$$
where $r\in {\mathbb Z}[Z]$ is monic. By looking at the multiplicities of the irreducible factors of $r(Z)$, we conclude that $r(Z)=p_1(Z)^{b_i}$, so 
$$
p(Z)=p_1(Z^{d_1}),
$$
for some polynomial $p_1(Z)\in {\mathbb Q}[Z]$. It is easy to see that $p_1(Z)$ also has leading coefficient $1$, integer coefficients since its coefficients are both 
algebraic integers (by the Vi\'ete formulas) and rational numbers, and it is irreducible in ${\mathbb Z}[Z]$ since it takes on infinitely many prime values when specialized  in suitable for $Z$ of the form $2^{2^{\beta'-d_0}}$ for infinitely many positive integers 
$\beta'$.  Thus, in \eqref{eq:112}, we may replace $d$ by $d_0'=2^{d_0}$ and $P(Z)$ by $P_1(Z)$ and the analog \eqref{eq:112} namely the relation
$$
Z^{d_0'n_1'}P_1(Z)^m+1=\prod_{i=1}(Z^{d_0'\lambda_i'} P_1(Z)^{b_i}+1)
$$
holds identically. Next, since $q_i(Z)\in {\mathbb Z}[Z]$ have roots which are roots of unity and are monic and irreducible, it follows  that
$$
q_i(Z)=\Phi_{M_i}(Z)
$$
for some integer ${M_i}$ and all $i=1,\ldots,k$, where $\Phi_{M}(z)$ is the $M$th cyclotomic polynomial. We are finally ready to reach a contradiction. Assume first that $b_i\ge 2$ for some $i\in \{1,\ldots,k\}$. Then 
$$
Z^{d_0'\lambda_i'}p_1(Z)^{b_i}=\Phi_{M_i}(Z)-1.
$$
We may change $Z$ to $Z^2$ above (if $d_0'\lambda_i'=1$) and get 
$$
Z^{2d_0'\lambda_i'}p_1(Z^2)^{b_i}=\Phi_{M_i}(Z^2)-1.
$$
In the right--hand side above, we have that $\Phi_{M_i}(Z^2)=\Phi_{2M_i}(Z)$ if $M_i$ is even and $\Phi_{M_i}(Z^2)=\Phi_{M_i}(Z)\Phi_{2M_i}(Z)$ if $M_i$ is odd. The polynomial on the left has all its roots of multiplicity at least $2$. 
This means that all roots of $\Phi_{M_i}(Z^2)-1$ are also multiple, so they must be among the roots of the derivative of $\Phi_{M_i}(Z^2)$. But the roots of the derivative $H'(Z)$ of a polynomial with complex coefficients $H(Z)$ are 
inside the convex hull of the roots of $H(Z)$. Since $H(Z)=\Phi_{M_i}(Z^2)$ has   $2\phi(M_i)$ roots on the unit circle which are all simple, it follows that the roots of its derivative are strictly inside the unit circle. 
This contradicts the fact that the roots of $p_1(Z^2)$ are algebraic integers via Kronecker's theorem. Thus, the situation when $b_i\ge 2$ for some $i=1,\ldots,k$ is not possible. 

So, it remains to treat the situation when $b_i\in \{0,1\}$ for $i=1,\ldots,k$. Then $\{1,\ldots,k\}=I(0)\cup I(1)$, where 
$$
I(c):=\{1\le i\le k, q_i(Z)=Z^{d_0'\lambda_i} p_1(Z)^c+1\}.
$$ 
As we noticed, for $c=1$ we have that $q_i(Z)$ are all cyclotomic polynomials. This is true for $c=0$ as well since in this case $q_i(Z)=Z^{d_0\lambda_i'}+1=\Phi_{2d_0'\lambda_i'}(Z)$ as $d_0'$ and $\lambda_i'$ are powers of $2$. 
Thus, 
$$
Z^{d_0'n_1'}p_1(Z)^m=\prod_{i=1}^k \Phi_{M_i}(Z)-1,
$$
for distinct integers $M_1,\ldots,M_k$. Up to replacing $Z$ by $Z^2$ (if $d_0'n_1'=1$), we may assume that the left--hand side is a polynomial all whose roots are multiple. It then follows that all roots of the polynomial 
on the right--hand side are also multiple. Now we get the same contradiction as in  the case when $b_i\ge 2$ for some $i=1,\ldots,k$, namely that the roots of the derivative of the polynomial on the right--hand side are strictly inside the unit circle, contradicting the fact that the roots of $p_1(Z)$ are algebraic integers. 

This contradiction shows that the  conclusion of the Lemma \ref{lem:CZ} cannot hold, therefore the hypothesis of that lemma cannot hold either. Hence, there are only finitely many Carmichael numbers of the form $N=2^np^m+1$ for fixed $m$ odd and primes $p>500 m^5\log m$, but this part of the proof is ineffective in that we don't have an upper bound for the largest such $p$ depending on $m$.

\section{Acknowledgements}

We thank Yuri Bilu for providing a reference. During the preparation of this paper F. L. was partly supported by the 2024 ERC Synergy Project {\it DinAMiCs}.

\end{document}